\DeclareMathOperator{\Var}{Var}
\newcounter{maincounter}
\newtheorem{theorem}[maincounter]{Theorem}
\newtheorem{prop}[maincounter]{Proposition}
\theoremstyle{definition}
\newtheorem{example}[maincounter]{Example}
\newcommand{\given}{\mathbin{|}}
\newcommand{\R}{\mathbb{R}}
\newcommand{\N}{\mathbb{N}}
\renewcommand{\d}{\hspace{1pt} \mathrm{d}}
\renewcommand{\P}{\operatorname{P}}
\newcommand{\E}{\operatorname{E}}
\newcommand{\W}{\mathcal{W}}
\newcommand{\from}{\colon}
\newcommand{\banach}[1]{\mathbf{#1}}
\newcommand{\sigAlg}[1]{\mathcal{#1}}
\newcommand\independent{\protect\mathpalette{\protect\independenT}{\perp}}
\def\independenT#1#2{\mathrel{\rlap{$#1#2$}\mkern2mu{#1#2}}}
\renewcommand{\c}{^{\mathsf{c}}}
\newcommand{\rc}{^{\mathsf{r}}}
\newcommand{\indic}[1]{\mathbf{1}(#1)}
\title{Counting processes in $p$-variation with applications to recurrent events}
\author{
Morten Overgaard \\
\small Department of Public Health, Aarhus University, \\
\small Bartholins All\'{e} 2, DK-8000 Aarhus C, Denmark \\
\small moov@ph.au.dk
}
\begin{document}
\maketitle

\begin{abstract}
Convergence results for averages of independent replications of counting processes are established in a $p$-variation setting and under certain assumptions. Such convergence results can be combined with functional differentiability results in $p$-variation in order to study the asymptotic properties of estimators that can be considered functionals of such averages of counting processes. Examples of this are given in recurrent events settings, confirming known results while also establishing the appropriateness of the pseudo-observation method for regression analysis. In a recurrent events setting with a terminal event, it is also established that it is more efficient to discard complete information on a censoring time and instead consider the censoring times censored by the terminal event.
\end{abstract}

\section{Introduction}

The concept of $p$-variation allows for an elegant way of studying asymptotic properties of estimators depending on the data through the empirical distribution function of one or more one-dimensional variables. 
This is because of two things.
Firstly, the empirical distribution function, based on independent and identically distributed observations, converges to the true distribution function in $p$-variation for many $p$. 
Secondly, many such estimators may be described as differentiable functionals of the empirical distribution function in a $p$-variation setting, at least for some values of~$p$.
This means that asymptotic properties can be derived by what is essentially a functional delta method.
Unfortunately, many estimators do not depend only on the empirical distribution of one-dimensional variables, but will rather be more complex.
This motivates looking for extensions to this approach, which involves proving convergence in $p$-variation of more general averages to the true mean.  

In this paper, we will see how the approach described above can be used for estimators based on averages of counting processes.
The main result is a convergence result for counting processes in $p$-variation which builds on and extends results by Qian \cite{qian1998}.
Applications to different estimators of the mean function in a recurrent events setting serve as demonstrations of how the described approach can be used to derive asymptotic properties of the different estimators. Expressions of influence functions and asymptotic variances and suggestions for natural variance estimators are also given for the different estimators and the appropriateness of the pseudo-observation method based on these estimators is established under some conditions.

\section{Main results} \label{sec:main}

A counting process $N$ is characterized by taking only non-negative integer values, $N \in \N_0$, and being increasing, $N(t) \geq N(s)$ for $t \geq s$.
In our setting, counting processes are also continuous on the right with limits on the left and are defined on the time interval $[0, \infty]$ with the requirements $N(0) = 0$ and $N(\infty) = \lim_{t \to \infty} N(t)$.
A counting process which is no larger than 1 is called simple.

For a counting process $N$, the time points $T_k := \inf\{s : N(s) \geq k\}$ for $k \in \N$ characterize the process. Specifically, we can define simple counting processes by $N^{(k)}(s) = \indic{T_k \leq s}$, $s \in [0, \infty)$ for $k \in \N$, in which case 
\begin{equation} \label{eq:counting_decomposition}
    N(s) = \sum_{k = 1}^\infty N^{(k)}(s)
\end{equation}
for all $s \in [0, \infty]$. 

If $E(N(\infty)) < \infty$ for a counting process, $N$, as defined above then $\E(N(s)) < \infty$ for all $s \in [0,\infty]$. In this case, the notation $F$ for the mean function given by $F(s) = \E(N(s))$ will be used in the following.

Consider $p \geq 1$. For a real function $f$ defined on an interval $J$ the $p$-variation is defined by
\begin{equation}
    v_p(f;J) = \sup_{m \in \N, t_0, \dots, t_m \in J} \sum_{j=1}^m |f(t_j)-f(t_{j-1})|^p
\end{equation}
and $f$ is of bounded $p$-variation when $v_p(f;J) < \infty$. An equivalent definition and the following results can be found in the book by Dudley and Norvai\v{s}a \cite{Dudley2010}.
The definition $\|f\|_{(p)} = v_p(f;J)^{\frac{1}{p}}$ leads to a seminorm and if $\|f\|_\infty = \sup_{t \in J} |f(t)|$, norms are defined by both $\| f \|_\infty$, the supremum norm, and $\| f\|_{[p]} = \|f\|_{(p)} + \|f\|_{\infty}$, the $p$-variation norm.
The space $\W_p(J)$ of all functions $f \from J \to \R$ of bounded $p$-variation, $v_p(f;J) < \infty$, is a Banach space, a complete normed vector space, when equipped with the $p$-variation norm.
This is similarly the case for the subset $\W_p\rc(J)$ of right-continuous functions.
The interval $J$, which will be $[0,\infty)$ in the following, is generally dropped from the notation when it is clear from the context as has already been done in the cases of $\|\cdot\|_{\infty}$, $\|\cdot\|_{(p)}$, and $\|\cdot\|_{[p]}$.
An important convergence result by Qian, Theorem~3.2 of \cite{qian1998}, implies the following result. 
\begin{prop} \label{prop:QianE}
Let $F$ be a cumulative distribution function with mass on $(0, \infty)$ and let, for each $n \in \N$, $F_n$ be the empirical distribution of $n$ independent observations $X_1, \dots, X_n \in [0,\infty)$ from the distribution given by $F$, that is, given by $F_n(t) = \frac{1}{n} \sum_{i=1}^n \indic{X_i \leq t}$. Then, for $p \in [1, 2)$, a constant $C_p$, only depending on~$p$, exists such that
\begin{equation} \label{eq:QianE}
    \E(v_p(F_n - F)) \leq C_p n^{1-p}
\end{equation}
for any $n \in \N$.
\end{prop}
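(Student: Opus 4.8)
The plan is to reduce the assertion to the case of the uniform distribution on $[0,1]$, where it is the content of Qian's Theorem~3.2, by exploiting the invariance of $p$-variation under monotone reparametrisation of the domain. Since $v_p(F_n - F)$ is a measurable function of $(X_1, \dots, X_n)$, whose law is the $n$-fold product of the distribution given by $F$, we may assume without loss of generality that $X_i = F^{-1}(U_i)$ with $U_1, \dots, U_n$ independent and uniform on $[0,1]$ and $F^{-1}(u) = \inf\{t : F(t) \geq u\}$ the quantile function. Writing $\Gamma_n(u) = \frac{1}{n}\sum_{i=1}^n \indic{U_i \leq u}$ for the uniform empirical distribution function and $I$ for the identity on $[0,1]$, the relation $F^{-1}(u) \leq t \iff u \leq F(t)$ gives $\indic{X_i \leq t} = \indic{U_i \leq F(t)}$, hence $F_n = \Gamma_n \circ F$ on $[0,\infty)$, and since trivially $F = I \circ F$ we obtain $F_n - F = (\Gamma_n - I)\circ F$.

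Next I would pass to $p$-variation. For any $t_0 < \dots < t_m$ in $[0,\infty)$, the points $F(t_0) \leq \dots \leq F(t_m)$ lie in $[0,1]$, and deleting repeated values among them does not decrease the corresponding sum, so
\begin{equation*}
    \sum_{j=1}^m \bigl| (\Gamma_n - I)(F(t_j)) - (\Gamma_n - I)(F(t_{j-1})) \bigr|^p \leq v_p(\Gamma_n - I; [0,1]).
\end{equation*}
Taking the supremum over partitions yields $v_p(F_n - F) \leq v_p(\Gamma_n - I; [0,1])$, and hence $\E(v_p(F_n - F)) \leq \E(v_p(\Gamma_n - I; [0,1]))$. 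It then suffices to bound the latter by $C_p n^{1-p}$. For each fixed $n$ this expectation is finite — indeed at most $2^p$, because $\Gamma_n - I$ has total variation at most $2$ and oscillation at most $2$, so that $v_p(\Gamma_n - I; [0,1]) \leq 2^{p-1}\cdot 2$ deterministically — and Qian's Theorem~3.2 supplies the rate: either directly as a bound $\E(v_p(\Gamma_n - I;[0,1])) \leq C_p n^{1-p}$, or as a limiting statement for $n^{p-1}\E(v_p(\Gamma_n - I;[0,1]))$, from which the uniform constant $C_p := \sup_n n^{p-1}\E(v_p(\Gamma_n - I;[0,1])) < \infty$ is recovered since a convergent sequence of finite numbers is bounded.

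The only point requiring care is that $F$ need not be continuous nor map onto $[0,1]$: a partition of $[0,\infty)$ is then sent by $F$ only to a \emph{weakly} increasing family of points, whose consecutive gaps may be absorbed inside a single jump or flat of $F$. This causes no difficulty, because passing from a weakly to a strictly increasing family can only lower the sum and restricting $v_p$ to a subinterval of $[0,1]$ can only lower its value, so the inequality $v_p(F_n - F) \leq v_p(\Gamma_n - I;[0,1])$ survives intact. The hypothesis that $F$ has mass on $(0,\infty)$ is not used here — the degenerate point mass at $0$ gives $F_n = F$ and $v_p(F_n - F) = 0$ — and is kept only for consistency with the later results; and the substantive analytic estimate, namely that the $n$ jumps of size $1/n$ of $\Gamma_n$ together with the continuous part of $\Gamma_n - I$ between them contribute at the scale $n^{1-p}$, is exactly what Qian's theorem provides and what I would simply cite.
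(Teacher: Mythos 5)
Your proof is correct and takes essentially the same route as the paper, which simply derives the proposition from Qian's Theorem~3.2 without further argument. The quantile-transform identity $F_n - F = (\Gamma_n - I)\circ F$ and the fact that $v_p$ can only decrease under a weakly increasing reparametrisation of the domain, which you supply, are exactly the standard details behind that citation.
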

The inequality of \eqref{eq:QianE} also implies 
\begin{equation} \label{eq:QianE_alt}
    \E(\|F_n - F\|_{[p]}) \leq 2C_p^{\frac{1}{p}} n^{\frac{1-p}{p}}
\end{equation}
according to Jensen's inequality with the convex function $x \mapsto x^p$ and the fact that $F_n(0)-F(0)=0$ under the same conditions.

The processes $t \mapsto \indic{X_i \leq t}$ at play above are simple counting processes. 
Proposition~\ref{prop:QianE} is the main ingredient in obtaining the following convergence result for more general simple counting processes, which generalizes~\eqref{eq:QianE_alt}.
\begin{theorem} \label{theorem:simple}
Consider a simple counting process $N$ with mean function $F$. For each $n \in \N$, let $F_n = n^{-1} \sum_{i=1}^n N_{i}$ be the average of independent replications $N_1, \dots, N_n$ of $N$. Then, for any $p \in [1, 2)$ a constant $K_p$, only depending on $p$, exists such that 
\begin{equation}
\E(\| F_n - F \|_{[p]}) \leq K_p F(\infty)^{\frac{1}{p}} n^{\frac{1-p}{p}}
\end{equation}
for any $n \in \N$.
\end{theorem}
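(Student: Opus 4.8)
The plan is to reduce the simple counting process $N$ to a distributional object to which Proposition~\ref{prop:QianE} applies. A simple counting process is determined by a single jump time $T_1 = \inf\{s : N(s) \geq 1\}$, taking values in $[0,\infty]$, with $N(s) = \indic{T_1 \leq s}$ on $[0,\infty)$ and $F(s) = \P(T_1 \leq s)$. The quantity $F(\infty) = \P(T_1 < \infty) =: p_0$ is the probability that the process ever jumps, and in general $p_0 \leq 1$, so $G(s) := F(s)/p_0$ is a genuine cumulative distribution function on $[0,\infty)$ (with the convention that if $p_0 = 0$ the statement is trivial since then $F_n = F = 0$ almost surely). The idea is to write $N_i(s) = B_i \, \indic{Y_i \leq s}$ where $B_i = \indic{T_{1,i} < \infty}$ is Bernoulli$(p_0)$ and, conditionally on $B_i = 1$, $Y_i$ has distribution $G$; this is the standard decomposition of a sub-probability measure into its mass and its normalized version.

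With this decomposition, write $S = \sum_{i=1}^n B_i$, the number of replications that jump. Conditionally on $S = m$ and on which indices jump, the corresponding jump times $Y_i$ are i.i.d.\ from $G$, so $\sum_{i : B_i = 1} \indic{Y_i \leq s} = m \, G_m(s)$ where $G_m$ is the empirical distribution of $m$ such observations. Hence
\begin{equation}
F_n - F = \frac{S}{n}(G_S - G) + \Bigl(\frac{S}{n} - p_0\Bigr) G,
\end{equation}
where $G_S$ is interpreted as the empirical distribution of $S$ i.i.d.\ draws from $G$ (for $S = 0$ the first term is zero). By the triangle inequality for $\|\cdot\|_{[p]}$, it suffices to bound $\E\bigl(\tfrac{S}{n}\|G_S - G\|_{[p]}\bigr)$ and $\E\bigl(|\tfrac{S}{n} - p_0|\,\|G\|_{[p]}\bigr)$ separately. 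For the first term, condition on $S$: given $S = m \geq 1$, Proposition~\ref{prop:QianE} (in the form~\eqref{eq:QianE_alt}) gives $\E(\|G_m - G\|_{[p]}) \leq 2C_p^{1/p} m^{(1-p)/p}$, so the conditional contribution is at most $\tfrac{m}{n} \cdot 2C_p^{1/p} m^{(1-p)/p} = 2C_p^{1/p} \tfrac{m^{1/p}}{n}$; taking expectations over $S \sim \mathrm{Bin}(n, p_0)$ and using $\E(S^{1/p}) \leq (\E S)^{1/p} = (n p_0)^{1/p}$ by Jensen's inequality (since $1/p \leq 1$) bounds this term by $2C_p^{1/p} p_0^{1/p} n^{(1-p)/p}$. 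For the second term, $\|G\|_{[p]} = \|G\|_{(p)} + \|G\|_\infty$; since $G$ is a nondecreasing function bounded by $1$, $\|G\|_\infty \leq 1$ and $\|G\|_{(p)} = v_p(G)^{1/p} \leq v_1(G)^{1/p} \leq 1$ (using $v_p \leq v_1$ for monotone functions bounded by $1$, as $|{\Delta G}|^p \leq |\Delta G|$ when $p \geq 1$ and increments are in $[0,1]$), so $\|G\|_{[p]} \leq 2$; and $\E|\tfrac{S}{n} - p_0| \leq (\Var(S/n))^{1/2} = (p_0(1-p_0)/n)^{1/2} \leq p_0^{1/2} n^{-1/2}$. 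Combining, and bounding $p_0^{1/2} \leq p_0^{1/p}$ (valid since $p_0 \leq 1$ and $1/2 \geq 1/p$ for $p \in [1,2)$... actually $1/p > 1/2$, so $p_0^{1/p} \leq p_0^{1/2}$; one should instead keep the $p_0^{1/2}$ and the $n^{-1/2}$ and note $n^{-1/2} \leq n^{(1-p)/p}$ fails for $p$ near $2$), care is needed here.

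The main obstacle, and the place where the argument needs care, is matching the exponents: the second term carries $n^{-1/2}$ and $p_0^{1/2}$, whereas the target rate is $n^{(1-p)/p}$ with $F(\infty)^{1/p} = p_0^{1/p}$. For $p \in [1,2)$ one has $(1-p)/p \in (-1/2, 0]$, so $n^{-1/2} \leq n^{(1-p)/p}$ holds for all $n \geq 1$; and $p_0^{1/2} \leq p_0^{1/p}$ since $p_0 \leq 1$ and $1/2 < 1/p$. So in fact the second term is \emph{dominated} by $p_0^{1/p} n^{(1-p)/p}$ up to the constant $2$, and the concern dissolves. Assembling both bounds yields $\E(\|F_n - F\|_{[p]}) \leq (2C_p^{1/p} + 4) p_0^{1/p} n^{(1-p)/p}$, so one may take $K_p = 2C_p^{1/p} + 4$, which depends only on $p$. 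A final detail worth checking is the handling of the event $S = 0$ (then $F_n \equiv 0$ and the first term vanishes, consistent with the bound) and the measurability/conditioning step that reduces the jump times, given $S = m$, to $m$ genuine i.i.d.\ draws from $G$ so that Proposition~\ref{prop:QianE} applies verbatim; both are routine but should be stated.
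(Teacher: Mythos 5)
Your decomposition is exactly the one the paper uses, and your treatment of the first term (conditioning on the number $S$ of replications that jump, applying Proposition~\ref{prop:QianE} conditionally, and then using Jensen's inequality to get $\E(S^{1/p}) \le (np_0)^{1/p}$) is correct. The gap is in the second term. The inequality $p_0^{1/2} \le p_0^{1/p}$ that you invoke at the end is backwards: for a base $p_0 \in [0,1]$ the power is \emph{decreasing} in the exponent, and since $1/p > 1/2$ for $p < 2$ you have $p_0^{1/p} \le p_0^{1/2}$, not the reverse (you in fact noticed this mid-proof and then asserted the wrong direction anyway in the final paragraph). This is not a cosmetic slip: the Cauchy--Schwarz bound $\E|S/n - p_0| \le (p_0(1-p_0)/n)^{1/2}$ genuinely cannot be absorbed into $K_p\, p_0^{1/p} n^{(1-p)/p}$, because the ratio of the two sides is of order $(np_0)^{1/2 - 1/p}$, which tends to infinity as $np_0 \to 0$ (take $n=1$ and $p_0 \to 0$, say). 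So no constant $K_p$ depending only on $p$ closes your argument as written.

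The fix is to bound the $p$-th moment of $S - np_0$ rather than the second moment, which is what the paper does. Since $p \in [1,2)$, one has $|x|^p \le \max(|x|,|x|^2) \le |x| + |x|^2$, and for $S \sim \mathrm{Bin}(n,p_0)$ both $\E|S - np_0| \le 2np_0(1-p_0)$ (bound $|S - np_0|$ by $S(1-p_0) + (n-S)p_0$) and $\E(|S-np_0|^2) = np_0(1-p_0)$ hold, so $\E(|S - np_0|^p) \le 3np_0$. Jensen's inequality with the concave map $x \mapsto x^{1/p}$ then gives $\E|S/n - p_0| \le n^{-1}(3np_0)^{1/p} = 3^{1/p} p_0^{1/p} n^{(1-p)/p}$, which has exactly the right dependence on both $n$ and $p_0$. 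With this replacement your proof goes through and is essentially identical to the paper's; the remaining details you flag (the event $S=0$, and the conditional reduction to $S$ i.i.d.\ draws from $G$) are handled correctly.
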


\begin{proof} 
Let $p \in [1,2)$, $n \in \N$ and the processes $N_1, \dots, N_n$ be given and let $\tilde n = \# \{i : N_i(\infty) = 1 \}$ be the number of observed jumps. Interpreting division by 0 as 0 allows for the equations
\begin{equation} \label{eq:FnFsplit}
\begin{aligned}
    F_n - F &= \frac{\tilde n}{n} \frac{1}{\tilde n}\sum_{i=1}^n N_i - F(\infty) \frac{F}{F(\infty)} \\
    &= \frac{\tilde n}{n} \big( \frac{1}{\tilde n} \sum_{i=1}^n N_i - \frac{F}{F(\infty)} \big) + ( \frac{\tilde n}{n} - F(\infty)) \frac{F}{F(\infty)}.
\end{aligned}
\end{equation}
Because $F$ is 0 at 0 and increasing, we have $\| F /F(\infty)\|_{[p]} \leq 2$. Note that $F(\infty) \in [0,1]$ because $N$ is simple. 
Since $\tilde n$ follows a binomial distribution of $n$ trials with probability $F(\infty)$, we have $\E(| \tilde n - n F(\infty)|) \leq \E(\tilde n(1-F(\infty)) + (n-\tilde n) F(\infty)) = 2 n F(\infty)(1-F(\infty))$ and $\E(| \tilde n - n F(\infty)|^2) = \Var(\tilde n) = nF(\infty)(1-F(\infty))$ and so $\E(| \tilde n - n F(\infty)|^p) \leq \E(\max(| \tilde n - n F(\infty)|, | \tilde n - n F(\infty)|^2)) \leq \E(| \tilde n - n F(\infty)|) + \E(| \tilde n - n F(\infty)|^2) \leq 3 n F(\infty)(1-F(\infty))$. It is then an application of Jensen's inequality with the convex function $x \mapsto x^p$ which reveals
\begin{equation}
\begin{aligned}
    \E \big(\big| \frac{\tilde n}{n} - F(\infty) \big|\big) &\leq \frac{1}{n} \E(| \tilde n - n F(\infty)|^p)^{\frac{1}{p}} \leq \frac{1}{n} \big(3n F(\infty)(1-F(\infty))\big)^{\frac{1}{p}} \\
    & \leq 3^{\frac{1}{p}} n^{\frac{1-p}{p}} F(\infty)^{\frac{1}{p}}.
\end{aligned}
\end{equation}
The function $F/F(\infty)$ is a cumulative distribution function. In the conditional distribution given $N_1(\infty), \dots, N_n(\infty)$, the $\tilde n^{-1} \sum_{i=1}^n N_i$ is an empirical distribution of $\tilde n$ independent observations from the distribution given by $F/F(\infty)$. 
Let $\sigAlg{A}$ be the $\sigma$-algebra generated by $N_1(\infty), \dots, N_n(\infty)$.
An application of Proposition~\ref{prop:QianE} and Jensen's inequality as before in the conditional distribution reveals, almost surely,
\begin{equation}
\begin{aligned}
    \E \big(\big\|\frac{1}{\tilde n} \sum_{i=1}^n N_i - \frac{F}{F(\infty)} \big\|_{[p]} \given \sigAlg{A} \big) &\leq 2\E \big(v_p \big(\frac{1}{\tilde n} \sum_{i=1}^n N_i - \frac{F}{F(\infty)} \big) \given \sigAlg{A} \big)^{\frac{1}{p}} \\
    &\leq 2C_p^{\frac{1}{p}} \tilde n^{\frac{1-p}{p}},
\end{aligned}
\end{equation}
where $C_p$ is the constant from Proposition~\ref{prop:QianE}.
It is worth noting that $\sigma(\tilde n) \subseteq \sigAlg{A}$ and that the same constant $C_p$ can be used no matter the distribution according to Proposition~\ref{prop:QianE}.
The considerations above mean that equation~\eqref{eq:FnFsplit} leads to
\begin{equation}
\begin{aligned}
    \E(\|F_n - F\|_{[p]}) &\leq 2C_p^{\frac{1}{p}} \frac{1}{n} \E(\tilde n^{\frac{1}{p}}) +  3^{\frac{1}{p}} 2 n^{\frac{1-p}{p}} F(\infty)^{\frac{1}{p}} \\
    &\leq 2(C_p^{\frac{1}{p}} + 3^{\frac{1}{p}}) n^{\frac{1-p}{p}} F(\infty)^{\frac{1}{p}}
\end{aligned}
\end{equation}
where Jensen's inequality is again used to establish that $\E((\tilde n / n)^{1/p}) \leq \E(\tilde n/ n)^{1/p} = F(\infty)^{1/p}$. This shows the desired upper bound with the constant $K_p = 2(C_p^{\frac{1}{p}} + 3^{\frac{1}{p}})$.
\end{proof}

Let us now turn our attention to more general counting processes as defined in the beginning of this section. The characterization of a counting process as a sum of simple counting processes allows us to establish the following convergence result by appealing to Theorem~\ref{theorem:simple}.

\begin{theorem} \label{theorem:generalE}
Let $p \in [1,2)$ be given and consider a counting process $N$ with mean function $F$ such that $N(\infty)$ has finite moment of order $p+\varepsilon$ for some $\varepsilon > 0$. 
For each $n \in \N$, let $F_n = n^{-1} \sum_{i=1}^n N_{i}$ be the average of independent replications $N_1, \dots, N_n$ of $N$.
Then a constant $C$ exists, depending on $p$ as well as the distribution of $N(\infty)$, such that
\begin{equation} \label{eq:multjumpE}
\E(\| F_n - F \|_{[p]}) \leq C n^{\frac{1-p}{p}},
\end{equation}
for any $n \in \N$.
\end{theorem}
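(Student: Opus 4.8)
The plan is to reduce the claim to Theorem~\ref{theorem:simple} by decomposing $N$ into its simple components through \eqref{eq:counting_decomposition}. Write $N^{(k)}(s) = \indic{T_k \leq s}$ for the $k$-th simple counting process, $F^{(k)}(s) = \E(N^{(k)}(s)) = \P(T_k \leq s)$ for its mean function, and $F_n^{(k)} = n^{-1}\sum_{i=1}^n N_i^{(k)}$ for the corresponding average, where $N_i^{(k)}$ is obtained from $N_i$ exactly as $N^{(k)}$ is obtained from $N$. Since $N^{(k)}$ is a fixed measurable functional of the path of $N$, the processes $N_1^{(k)}, \dots, N_n^{(k)}$ are independent replications of the simple counting process $N^{(k)}$; moreover $N^{(k)}(0) = \indic{T_k \leq 0} = 0$ because $T_k \geq T_1 > 0$, and $N^{(k)}(\infty) = \indic{T_k < \infty}$, so that $F^{(k)}(\infty) = \P(N(\infty) \geq k)$. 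By monotone convergence $F = \sum_{k=1}^\infty F^{(k)}$, and for each fixed argument $F_n = \sum_{k=1}^\infty F_n^{(k)}$ (a finite sum for each realization), hence $F_n - F = \sum_{k=1}^\infty (F_n^{(k)} - F^{(k)})$ pointwise.

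Next I would apply Theorem~\ref{theorem:simple} to each component, obtaining $\E(\|F_n^{(k)} - F^{(k)}\|_{[p]}) \leq K_p \P(N(\infty) \geq k)^{1/p} n^{(1-p)/p}$, and sum over $k$, which is permitted by Tonelli's theorem since all terms are non-negative:
\[
\sum_{k=1}^\infty \E\bigl(\|F_n^{(k)} - F^{(k)}\|_{[p]}\bigr) \leq K_p \, n^{\frac{1-p}{p}} \sum_{k=1}^\infty \P\bigl(N(\infty) \geq k\bigr)^{\frac{1}{p}}.
\]
The moment hypothesis enters precisely in bounding the series on the right: by Markov's inequality $\P(N(\infty) \geq k) \leq \E(N(\infty)^{p+\varepsilon}) \, k^{-(p+\varepsilon)}$, so $\P(N(\infty) \geq k)^{1/p} \leq \E(N(\infty)^{p+\varepsilon})^{1/p} \, k^{-1-\varepsilon/p}$, and since $1 + \varepsilon/p > 1$ the series converges, with value at most $\E(N(\infty)^{p+\varepsilon})^{1/p}$ times a finite constant depending only on $p$ and $\varepsilon$.

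It then remains to pass from these component-wise bounds to one for $\|F_n - F\|_{[p]}$ itself, that is, to justify the countable triangle inequality $\|F_n - F\|_{[p]} \leq \sum_{k=1}^\infty \|F_n^{(k)} - F^{(k)}\|_{[p]}$. By the display above the right-hand side has finite expectation and is thus finite almost surely; on that event, because $\W_p$ equipped with $\|\cdot\|_{[p]}$ is a Banach space the partial sums $\sum_{k=1}^K (F_n^{(k)} - F^{(k)})$ are Cauchy and converge in $\|\cdot\|_{[p]}$ to a limit of norm at most $\sum_{k=1}^\infty \|F_n^{(k)} - F^{(k)}\|_{[p]}$, and since $\|\cdot\|_\infty \leq \|\cdot\|_{[p]}$ this limit agrees with the pointwise limit $F_n - F$. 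Taking expectations and combining with the previous estimate gives \eqref{eq:multjumpE} with, for example, $C = K_p \, \E(N(\infty)^{p+\varepsilon})^{1/p} \sum_{k=1}^\infty k^{-1-\varepsilon/p}$.

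The individual computations — Markov's inequality, Tonelli, convergence of a $p$-series — are routine, so the step that needs genuine care is the handling of the infinite sum: that $F_n - F$ really is the $\W_p$-limit of its truncations and that the resulting almost-sure triangle inequality survives taking expectations. Completeness of $\W_p$ together with non-negativity of all terms makes this clean. I would also take care to verify explicitly that each $N^{(k)}$ is an admissible simple counting process in the sense used in Theorem~\ref{theorem:simple}, so that the theorem applies to the components without modification.
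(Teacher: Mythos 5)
Your proposal is correct and follows essentially the same route as the paper: decompose $N$ into its simple components via \eqref{eq:counting_decomposition}, apply Theorem~\ref{theorem:simple} to each, interchange sum and expectation, and control the tail series $\sum_k \P(N(\infty)\geq k)^{1/p}$ by Markov's inequality and the moment condition. Your extra care in justifying the countable triangle inequality via completeness of $\W_p$ fills in a step the paper passes over with a brief appeal to monotone convergence, but it is the same argument.
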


\begin{proof} 
Let $n \in \N$ and the processes $N_1, \dots, N_n$ be given.
For each $k \in \N$, let $N^{(k)}$ denote the simple counting process corresponding to the decomposition of $N$ as in \eqref{eq:counting_decomposition} and similarly with $N_i^{(k)}$ for $i = 1, \dots, n$. Also, for $k \in \N$, let $F^{(k)}$ denote the mean function of $N^{(k)}$ and let $F_n^{(k)} = n^{-1} \sum_{i=1}^n N_i^{(k)}$ be the empirical mean function. We then obtain the identities $F = \sum_{k=1}^\infty F^{(k)}$ and $F_n = \sum_{k=1}^\infty F_n^{(k)}$. 
By the triangle inequality, the monotone convergence theorem, and Theorem~\ref{theorem:simple}, we now have
\begin{equation} \label{eq:FnF_E_bound_general}
    \E(\|F_n - F\|_{[p]}) \leq \sum_{k=1}^\infty \E(\|F_n^{(k)} - F^{(k)}\|_{[p]}) \leq K_p \sum_{k=1}^\infty F^{(k)}(\infty)^{\frac{1}{p}} n^{\frac{1-p}{p}}.
\end{equation}
Now, $F^{(k)}(\infty) = \P(N(\infty) \geq k)$ and by Markov's inequality, we have, for the given $\varepsilon > 0$, $P(N(\infty) \geq k) \leq \E(N(\infty)^{p+\varepsilon})/k^{p+\varepsilon}$. Since $\E(N(\infty)^{p+\varepsilon}) < \infty$ by the moment condition, then
\begin{equation}
    C:= K_p \sum_{k=1}^\infty F^{(k)}(\infty)^{\frac{1}{p}} \leq K_p \E(N(\infty)^{p+\varepsilon})^{\frac{1}{p}} \sum_{k=1}^{\infty}  k^{-1-\frac{\varepsilon}{p}} < \infty
\end{equation}
and this proves the desired result.
\end{proof}

The bound in expectation in \eqref{eq:multjumpE} immediately, by Markov's inequality, gives the useful bound in probability,
\begin{equation}
\| F_n - F \|_{[p]} = O_{\P}(n^{\frac{1-p}{p}})
\end{equation}
under the same assumptions.
The following result gives an almost sure bound in some cases and has its inspiration in Theorem~4.2 of \cite{qian1998}. The proof will rely heavily on a lemma from \cite{dudley1983invariance} in addition to the results of Theorem~\ref{theorem:generalE} above.
\begin{theorem} \label{theorem:almost_sure}
Consider a counting process $N$ such that $N(\infty) \leq B$ almost surely for some $B > 0$. 
For each $n \in \N$, let $F_n = n^{-1} \sum_{i=1}^n N_{i}$ be the average of independent replications $N_1, \dots, N_n$ of $N$.
Then for any $p \in [1,2)$ a constant $\lambda$ exists, depending on $p$ and $B$, such that 
\begin{equation} \label{eq:as_bound}
\limsup_{n \to \infty} n^{\frac{p-1}{p}} \| F_n - F \|_{[p]}  \leq \lambda 
\end{equation}
almost surely. In particular, $\|F_n - F\|_{p]} = O(n^{(1-p)/p})$ almost surely in this case.
\end{theorem}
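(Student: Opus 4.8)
The strategy is the usual route from a bound in expectation to an almost-sure bound: establish a maximal inequality that controls $\max_{n \le N \le 2n} \|F_N - F\|_{[p]}$ in terms of the size at the endpoints, then subsample along a geometric sequence $n_j = \lfloor a^j \rfloor$ for some $a > 1$ and apply Borel--Cantelli. The first ingredient is Theorem~\ref{theorem:generalE}, which applies here since $N(\infty) \le B$ almost surely gives finite moments of all orders, in particular of order $p + \varepsilon$ for any $\varepsilon > 0$; thus $\E(\|F_{n_j} - F\|_{[p]}) \le C n_j^{(1-p)/p}$. The second ingredient is the referenced lemma of Dudley from \cite{dudley1983invariance}, which I expect to be a Lévy-type or Ottaviani-type maximal inequality for the $p$-variation norm of partial sums (or averages) of i.i.d. processes in $\W_p$. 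The plan is to write $n F_n = S_n := \sum_{i=1}^n N_i$, so that $\|F_n - F\|_{[p]} = n^{-1}\|S_n - nF\|_{[p]}$, and to use the maximal inequality to bound $\P(\max_{n_j \le n \le n_{j+1}} \|S_n - nF\|_{[p]} > t)$ by a constant times $\P(\|S_{n_{j+1}} - n_{j+1} F\|_{[p]} > t/2)$ or similar.

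Concretely, the steps in order are as follows. First, reduce to the centered partial-sum process $S_n - nF$ and record that $\|S_n - nF\|_{[p]} \le \|S_n - nF\|_\infty + \|S_n - nF\|_{(p)}$, both of which are controlled by the $\W_p$-norm of the increments; the process $n \mapsto S_n - nF$ is a sum of i.i.d.\ mean-zero elements of $\W_p\rc$. Second, invoke Dudley's lemma to get, for each block $[n_j, n_{j+1}]$,
\begin{equation}
\P\Big(\max_{n_j \le n \le n_{j+1}} \|S_n - nF\|_{[p]} > n_j^{1/p} \cdot u\Big) \le D \, \P\big(\|S_{n_{j+1}} - n_{j+1}F\|_{[p]} > c \, n_j^{1/p} u\big)
\end{equation}
for absolute constants $D, c$ (the precise form will depend on the statement of the lemma). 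Third, bound the right-hand side by Markov's inequality and Theorem~\ref{theorem:generalE}: $\P(\|S_{n_{j+1}} - n_{j+1}F\|_{[p]} > c n_j^{1/p} u) \le (c n_j^{1/p} u)^{-1} \E\|S_{n_{j+1}} - n_{j+1}F\|_{[p]} = (c n_j^{1/p} u)^{-1} n_{j+1} \E\|F_{n_{j+1}} - F\|_{[p]} \le (c n_j^{1/p} u)^{-1} C n_{j+1}^{1/p}$, which, using $n_{j+1} \le a \, n_j$, is at most a constant multiple of $u^{-1}$ — not summable. So a single power of Markov is not enough; instead I would apply Markov with a power $q > 1$ (legitimate because all moments are finite under the boundedness assumption), or better, apply Theorem~\ref{theorem:generalE} to the $q$-variation norm for a slightly larger exponent, to get a bound of order $n_j^{-\delta} u^{-q}$ for some $\delta > 0$, which is summable in $j$. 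Fourth, choose $u = \lambda/2$ a suitable constant, apply Borel--Cantelli to conclude $\max_{n_j \le n \le n_{j+1}} n^{(p-1)/p}\|F_n - F\|_{[p]} \le \lambda$ eventually, and finally patch the blocks together using $n_{j+1}/n_j \to a$ and letting $a \downarrow 1$ (or absorbing the $a$-factor into $\lambda$).

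The main obstacle is the second step: extracting from Theorem~\ref{theorem:generalE} a tail bound strong enough to be summable along the geometric subsequence after the maximal inequality. A bare first-moment bound plus Markov gives only $O(1/u)$, which fails Borel--Cantelli; the fix is to exploit the almost-sure boundedness $N(\infty) \le B$ to run Theorem~\ref{theorem:generalE} (or an analogous higher-moment version of Theorem~\ref{theorem:simple}) at exponent $q \in (p,2)$ and transfer back via the norm inequality $\|\cdot\|_{[p]} \le \text{const} \cdot \|\cdot\|_{[q]}$ on a bounded interval of values — or, if the ambient interval is unbounded, to control $\E\|S_n - nF\|_{[p]}^q$ directly by adapting the proof of Theorem~\ref{theorem:simple} with a moment of order $q$ in place of the first moment. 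A secondary technical point is making sure Dudley's lemma is stated for, or readily adapted to, the $p$-variation norm rather than the supremum norm; I would check that the lemma is formulated for partial sums of independent elements of a general Banach space (here $\W_p\rc$), in which case it applies verbatim.
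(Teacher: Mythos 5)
Your overall scheme (expectation bound from Theorem~\ref{theorem:generalE}, then a tail bound, then Borel--Cantelli) is the right shape, but the step you yourself flag as the main obstacle is exactly where the argument breaks, and the fixes you propose do not repair it. First, you have misidentified the cited lemma: Lemma~2.6 of \cite{dudley1983invariance} is not a L\'evy/Ottaviani maximal inequality but an exponential concentration inequality for $\|S_n\|_{[p]}$ around its mean, of the form $\P(\|S_n\|_{[p]} \geq K) \leq \exp(3\gamma^2 \tau_n - \gamma(K - \E\|S_n\|_{[p]}))$ for $\gamma \leq (2M)^{-1}$, where $M$ bounds $\|N_i - F\|_{[p]} \leq 4B$ and $\tau_n \geq \sum_i \E\|N_i - F\|_{[p]}^2$. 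With $\gamma = n^{(1-p)/p}$ and $K = t n^{1/p}$ this gives $\P(n^{(p-1)/p}\|F_n - F\|_{[p]} \geq t) \leq \exp(-n^{(2-p)/p}(t - \lambda))$ for $\lambda = C + 3(4B)^2$, which is summable over \emph{all} $n$ for $t > \lambda$; no blocking, no geometric subsequence, and no maximal inequality are needed. The boundedness hypothesis $N(\infty) \leq B$ is what makes $M$ and $\tau_n$ finite and hence what powers this lemma --- it is not merely a device for securing higher moments.

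Second, and more importantly, your proposed substitutes for the missing concentration do not produce a summable series. Markov at order $q$ applied to $\|S_{n_{j+1}} - n_{j+1}F\|_{[p]}$ against the threshold $c\,u\,n_j^{1/p}$ gives at best $(\E\|S_{n_{j+1}} - n_{j+1}F\|_{[p]}^q)/(c\,u\,n_j^{1/p})^q$, and since the $q$-th moment of the norm is itself of order $n_{j+1}^{q/p}$ (the norm concentrates at scale $n^{1/p}$), this ratio is a constant in $j$ depending only on $u$ and the block ratio $a$ --- there is no $n_j^{-\delta}$ factor, so Borel--Cantelli fails. The alternative transfer $\|\cdot\|_{[p]} \leq \mathrm{const}\cdot\|\cdot\|_{[q]}$ for $q > p$ is false: for $q \geq p$ one has $v_q(f) \leq (2\|f\|_\infty)^{q-p} v_p(f)$, so the $q$-variation norm is the \emph{smaller} one and the inequality you need goes the wrong way. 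What is genuinely required is a bound on the deviation of $\|S_n\|_{[p]}$ from $\E\|S_n\|_{[p]}$ that decays in $n$ at the scale $\varepsilon n^{1/p}$; an exponential bound (as in the paper) or at minimum a variance-of-the-norm bound of Efron--Stein type $\Var(\|S_n\|_{[p]}) \leq \sum_i \E\|X_i\|_{[p]}^2 \leq n(4B)^2$ combined with your blocking would do, but neither appears in your plan. As written, the proposal does not close.
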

\begin{proof}
The statement is trivial for $p=1$ where $\|F_n - F\|_{[p]} \leq \lambda$ for all $n$ for $\lambda = 4B$. Consider a given $p \in (1,2)$ and, for now, a given $n \in \N$ and the processes $N_1, \dots, N_n$. 
We let $X_j = N_j -F$, for $j=1, \dots, n$, as well as $S_n = \sum_{j=1}^n X_j$ be random elements of $\W_p$. The $\{X_j\}_{j=1}^n$ can be considered an independent sequence in the terminology of Section~2 of \cite{dudley1983invariance}. Owing to the upper bound of $N$ by $B$, we have $F(s) \leq B$ and so $\|X_j\|_{[p]} \leq 4B =: M$ and $\sum_{j=1}^n \E(\|X_j\|_{[p]}^2) \leq n (4B)^{2} =: \tau_n$. Lemma~2.6 of \cite{dudley1983invariance} now states that 
\begin{equation} \label{eq:dudley_bound}
    \P(\|S_n\|_{[p]} \geq K) \leq \exp(3 \gamma^2 \tau_n - \gamma (K - \E(\|S_n\|_{[p]})))
\end{equation}
for any $\gamma \in [0, (2M)^{-1}]$ and any $K > 0$. 
Now, supposing $n$ is sufficiently large that $n^{(1-p)/p} \leq (2M)^{-1}$, we want to use this with $\gamma = n^{(1-p)/p}$ and $K = t n^{1/p}$ for a given $t > 0$. Note that $S_n = n(F_n - F)$ and use that $\E(\|S_n\|_{[p]}) \leq C n^{1/p}$ for some $C > 0$ according to Theorem~\ref{theorem:generalE} to see that equation~\eqref{eq:dudley_bound} implies
\begin{equation} \label{eq:tail_DP}
\begin{aligned}
    \P(n^{\frac{p-1}{p}}\|F_n - F\|_{[p]} \geq t) &\leq \exp \big( 3 n^{2\frac{1-p}{p}} n(4B)^2 - n^{\frac{1-p}{p}}(t n^{\frac{1}{p}} - C n^{\frac{1}{p}})\big) \\
    &= \exp\big(-n^{\frac{2-p}{p}}(t - (C+3(4B)^2))\big)
\end{aligned}
\end{equation}
for any $t > 0$ for sufficiently large $n$. Let $\lambda = C+3(4B)^2$. Since $p < 2$ and if $t > \lambda$ is considered, the tail probability from~\eqref{eq:tail_DP} vanishes rapidly as $n$ increases. In particular, $\sum_{n=1}^\infty \P(n^{(p-1)/p} \|F_n - F\|_{[p]} \geq t)$ converges for $t > \lambda$. The Borel--Cantelli lemma then reveals that $\P( \limsup_{n \to \infty} \{n^{(p-1)/p} \|F_n - F\|_{[p]} \geq t\}) = 0$ for $t > \lambda$. This implies $\limsup_{n \to \infty} n^{(p-1)/p} \|F_n - F\|_{[p]} \leq \lambda$ almost surely, which is the desired result. Looking at the proof of Theorem~\ref{theorem:generalE} and equation~\eqref{eq:FnF_E_bound_general} in particular reveals $C \leq K_p B$, for $K_p$ from Theorem~\ref{theorem:simple}, can be used in this case such that $\lambda$ can be taken to only depend on $p$ and $B$ if desired. The statement $\|F_n - F\|_{[p]} = O(n^{(1-p)/p})$ almost surely means exactly the existence of a $\lambda$ such that \eqref{eq:as_bound} holds almost surely.
\end{proof}

\section{Application in a recurrent events setting}
\label{sec:count}

An example of a counting process is a process counting the number of recurrent events a study participant has experienced. 
Estimation of targets such as the expected number of events by a certain time point may be estimated in a straightforward manner by an average over independent replications when the process is completely observed.
When the counting of events of interest is sometimes prevented by censoring, estimation may be more complicated. 
When the censoring time is itself censored, perhaps by a terminal event, then estimation is further complicated.

In this section, we will see how the convergence results of Section~\ref{sec:main} may be applied to study the asymptotic properties of estimators in this setting by appealing to differentiability properties of the involved estimating functionals.
Here, differentiability means Fréchet differentiability. Appendix~\ref{appendix:differentiability} includes the most important definitions and properties for our purposes in a general Banach space-based setting, primarily based on Chapter~5 of \cite{Dudley2010}. As mentioned in Section~\ref{sec:main}, the function space $\W_p$ of functions of bounded $p$-variation is a Banach space when equipped with the $p$-variation norm, as is the case for the subspace $\W_p\rc$ of right-continuous functions of bounded $p$-variation. In particular owing to the inequalities $\|f g\|_{[p]} \leq \|f\|_{[p]} \|g\|_{[p]}$ and $\|\int_0^{(\cdot)} g(s) f(\d s)\|_{[p]} \leq k_p \|f\|_{[p]} \|g\|_{[p]}$ for a constant $k_p > 0$ for $f,g \in \W_p$ for $p \in [1,2)$, many important functionals are differentiable as functionals between $p$-variation-based spaces. In addition to the two implied bilinear functionals, these differentiable functionals include mapping to the inverse element and product integration.
Above, $\int_0^{(\cdot)} g(s) f(\d s)$ for $f,g \in \W_p$ should be considered a Young integral, see for instance \cite{Dudley1992}. The Young integral does correspond to the Lebesgue--Stieltjes integral when $f$ is of bounded variation, $f \in \W_1$, here. 
More details on these topics can be found in \cite{Dudley2010} and \cite{Dudley1999}. The supplements to the papers \cite{overgaard2017asymptotic, Overgaard2019} include some important details in a more condensed form.

Let $N$ be the counting process of interest, which is assumed square integrable. In this section, we let $\mu$ denote the mean function of $N$ such that $\mu(s) = \E(N(s))$, reserving the notation $F$ for a collection of such means. The mean function $\mu$ is the target of estimation in this section.

\begin{example} \label{example:uncens}
If information is available on $N_1, \dots, N_n$ which are $n$ independent replications of $N$, estimation may be performed by simply taking the average, $\hat \mu_n(t) = n^{-1} \sum_{i=1}^n N_i(t)$.
In this case, $\sqrt{n}(\hat \mu_n(t) - \mu(t))$ has an asymptotic normal distribution with variance $\Var(N(t))$ according to the central limit theorem. 
The convergence result of Theorem~\ref{theorem:generalE} implies that $\| \hat \mu_n - \mu \|_{[p]} = O_{\P}(n^{(1-p)/p})$ for $p \in [1,2)$ and also opens up for use of the functional delta method.
\end{example}

\begin{example} \label{example:cens_obs}
If observation of events is prevented after a right-censoring time $C$, we do not generally have information on the entire $N$, but only on $\tilde N$ given by 
\begin{equation}
    \tilde N(s) = \int_0^s \indic{C \geq u} N(\d u).
\end{equation}
Here, $\tilde N$ is simply another counting process.
Assume that $N$ is independent of $C$ and let $K(s) = \P(C \geq s)$. If $\nu$ denotes the mean function of $\tilde N$, then 
\begin{equation}
    \nu(s) = \int_0^s K(u) \mu(\d u),
\end{equation}
and so, for $s$ such that $K(s) > 0$,
\begin{equation} \label{eq:ipcw_motivation}
    \mu(s) = \int_0^s \frac{1}{K(u)} \nu(\d u).
\end{equation}
Let $X = (\tilde N, C)$ and suppose information is available on $X_1, \dots, X_n$ which are independent replications of $X$ with $X_i = (\tilde N_i, C_i)$. Then $\nu(s)$ can be estimated as in Example~\ref{example:uncens} by $\hat \nu_n(s) = n^{-1} \sum_{i=1}^n \tilde N_i(s)$, and $K(s)$ can be estimated by $\hat K_n(s) = n^{-1} \sum_{i=1}^n \indic{C_i \geq s}$. Equation~\eqref{eq:ipcw_motivation} now suggests the estimate 
\begin{equation} \label{eq:ipcw_estimate_emp}
    \hat \mu_n(s) = \int_0^s \frac{1}{\hat K_n(u)} \hat \nu_n(\d u)
\end{equation}
of $\mu(s)$. This corresponds to the estimator from~(2.2) of \cite{lawless1995some}.

The estimate of \eqref{eq:ipcw_estimate_emp} relies on empirical means of two counting processes, namely $\tilde N$ and $N_C$ given by $N_C(s) = \indic{C \leq s}$. The estimate is in fact obtained from the empirical means of those counting processes by a functional which is differentiable of any order in a $p$-variation setting.
Consider a given $t > 0$ such that $K(t) > 0$. Interest will now be in properties of $\hat \mu_n(s)$ from \eqref{eq:ipcw_estimate_emp} for $s \in [0,t]$. 
This will be studied through a functional approach based on $p$-variation.
To be specific, consider the Banach space $\banach{F} = \W_p\rc([0, \infty))^2$ for a $p \in [1,2)$, with a general element $f = (f_1, f_2) \in \banach{F}$ and norm given by $\|f\|_{\banach{F}} = \max(\|f_1\|_{[p]}, \|f_2\|_{[p]})$. The functional given by $K(f) = (s \mapsto K(f;s))$ with $K(f;s) = f_2(\infty) - f_2(s-)$ is linear and continuous as a functional from $\banach{F}$ to $\W_p([0,t])$. It is therefore differentiable of any order with a first order derivative at $f \in \banach{F}$ in direction $g \in \banach{F}$ which is $K_f'(g) = K(g)$. As can be seen from Theorem~4.16 of \cite{Dudley2010} since $\W_p([0,t])$ is a unital Banach algebra, if $U \subseteq \W_p([0,t])$ is the open subset of $\W_p([0,t])$ of $f$s for which $1/f \in \W_p([0,t])$, then $f \mapsto 1/f$ is differentiable of any order with a first order derivative at $f \in U$ in direction $g \in \W_p([0,t])$ which is $-g/f^2$.
Also, $(f_1,f_2) \mapsto \int_0^{(\cdot)} f_2(s) f_1(\d s)$ is, as a functional from $\W_p([0,t])^2$ to $\W_p([0,t])$, bilinear and continuous and thus differentiable of any order with first order derivative given by $\int_0^{(\cdot)} f_2(s) g_1(\d s) + \int_0^{(\cdot)} g_2(s) f_1(\d s)$. 
An $f \in U$ is characterized by being bounded uniformly away from 0.
It is now the chain rule, see Appendix~\ref{appendix:differentiability}, which reveals that 
\begin{equation}
    \mu \from f \mapsto \int_0^{(\cdot)} \frac{1}{K(f;s)} f_1(\d s)
\end{equation}
is differentiable of any order, at least in a neighborhood of an $f$ such that $K(f)$ is bounded uniformly away from 0, as a functional from $\banach{F}$ into $\W_p\rc([0,t])$. For $f$ where the functional $\mu$ is differentiable, the derivative at $f$ in direction $g$ is, by the chain rule, see~\eqref{eq:chain_rule} of Appendix~\ref{appendix:differentiability},
\begin{equation}
    \mu_f'(g) = \int_0^{(\cdot)} \frac{1}{K(f;s)} g_1(\d s) - \int_0^{(\cdot)} \frac{K(g;s)}{K(f;s)^2} f_1(\d s).
\end{equation}
If we let $G(s) = \P(C \leq s)$ and $F = (\nu, G)$, then the functional $\mu$ is, in particular, differentiable of any order in a neighborhood of $F \in \banach{F}$.
With the notation $x = (\tilde n, c)$ for $\tilde n \in \W_p\rc([0, \infty))$ and $c > 0$, $N_c(s) = \indic{c \leq s}$, and $\delta_x \in \banach{F}$ given by $\delta_x(s) = (\tilde n(s), N_c(s))$, the empirical mean of the counting processes is $F_n = n^{-1} \sum_{i=1}^n \delta_{X_i}$ and we see that $\hat \mu_n$ from \eqref{eq:ipcw_estimate_emp} is obtained by $\hat \mu_n = \mu(F_n)$.
The influence function, defined by $\dot \mu(x) = \mu_F'(\delta_x - F)$, of this estimator can be expressed as
\begin{equation}
    \dot \mu(x) = \int_0^{(\cdot)} \frac{1}{K(s)} \tilde n(\d s) - \int_0^{(\cdot)} \frac{\indic{c \geq s}}{K(s)} \mu(\d s)
\end{equation}
by using that $K(F;s) = K(s)$ and $\mu(s) = \int_0^s K(u)^{-1} \nu(\d u)$.
The differentiability of any order of $\mu$ in a neighborhood of $F$ is enough to establish
\begin{equation} \label{eq:taylor1}
    \mu(F_n) = \mu(F) + \mu_F'(F_n - F) + O(\|F_n - F\|_{\banach{F}}^2)
\end{equation}
as in \eqref{eq:Taylor1+Lip} of Appendix~\ref{appendix:differentiability}.
We have already assumed square integrability of $N$ and thus of $\tilde N \leq N$ while this is trivially the case for the bounded counting process $N_C$. This means that Theorem~\ref{theorem:generalE} ensures $\|F_n - F\|_{\banach{F}} = O_{\P}(n^{(1-p)/p})$. Take $p \in (4/3, 2)$, then we have, in particular, $\|F_n - F\|_{\banach{F}} = o_{\P}(n^{-1/4})$. From \eqref{eq:taylor1} and linearity of $\mu_F'$ we obtain
\begin{equation}
    \sqrt{n}(\hat \mu_n - \mu) = \sqrt{n}\frac{1}{n} \sum_{i=1}^n \dot \mu(X_i) + o_{\P}(1)
\end{equation}
in $p$-variation.
Evaluating at $s \in [0,t]$, this ensures that $\sqrt{n}(\hat \mu_n(s) - \mu(s))$ has the same asymptotic distribution as $n^{-1/2} \sum_{i=1}^n \dot \mu(X;s)$ which is a normal distribution with mean $\E(\dot \mu(X;s)) = 0$ and variance $\Var(\dot \mu(X;s))$. From the alternative expression of the influence function as
\begin{equation} \label{eq:C_obs_infl}
    \dot \mu(X;s) = N(s) - \mu(s) - \int_0^{s-} \frac{(N(s) - \mu(s) - (N(u) - \mu(u)))}{K(u+)} M_C(\d u),
\end{equation}
where $M_C(s) = N_C(s) - \int_0^s \indic{C \geq u} \Lambda(\d u)$, the variance can be seen to be
\begin{equation} \label{eq:var_expr_C_obs}
   \Var(\dot \mu(X;s)) = \Var(N(s)) + \int_0^{s-} \Var(N(s) - N(u)) \frac{1}{K(u+)} \Lambda(\d u)
\end{equation}
under the independence assumption $N \independent C$.
This variance can be estimated by $n^{-1} \sum_{i=1}^n \mu_{F_n}'(\delta_{X_i} - F_n;s)^2$ which turns out to be the variance estimate of (2.3) from \cite{lawless1995some}.
Some more details on the derivations of~\eqref{eq:C_obs_infl} and~\eqref{eq:var_expr_C_obs} can be found in Appendix~\ref{appendix:infl_var}. In comparison to Example~\ref{example:uncens} with no censoring, the last term of~\eqref{eq:var_expr_C_obs} can be seen as the added variance due to censoring when using the estimator $\hat \mu_n$ from \eqref{eq:ipcw_estimate_emp}. 
\end{example}

\begin{example} \label{example:cens_unobs}
Consider the setting of Example~\ref{example:cens_obs} with $N$ censored by $C$, leaving $\tilde N(s) = \int_0^s \indic{C \geq u} N(\d u)$ observed. As mentioned in the beginning of the section, $C$ may itself be censored such that the empirical estimate of $K(s) = \P(C \geq s)$ is not generally available. This is the setting considered in this example.
Suppose a terminal event at time $T$ right-censors observation of $C$. 
Here, $T$ is terminal in the sense that $N(s) = N(T \wedge s)$ for all $s$. We let $\tilde C = C \wedge T$ and $\tilde D = \indic{C < T}$. 
The function $K$ is the left-continuous version of a survival function for $C$ and takes the form $K(s) = \prodi_0^{s-}(1-\Lambda(\d u))$ for a right-continuous cumulative censoring hazard function $\Lambda$. 
If we let $G(s) = \P(C \leq s)$, we have $\Lambda(s) = \int_0^s K(u)^{-1} G(\d u)$. 
We will assume independence of $C$ and $(N,T)$. Then we also have 
\begin{equation} \label{eq:Lambda_identified}
    \Lambda(s) = \int_0^s \frac{1}{K\c(u)} G_1\c(\d u),
\end{equation}
for $s$ such that $K\c(s) > 0$, where $K\c(s) = \P(\tilde C > s) + \P(\tilde C = s, \tilde D = 1)$ and $G_1\c(s) = \P(\tilde C \leq s, \tilde D = 1)$ since, owing to the independence of $C$ and $T$, $K\c(s) = K(s) \P(T > s)$ and $G_1\c(s) = \int_0^s \P(T > u) G(\d u)$.
In this example, the basic observation is $X = (\tilde N, \tilde C, \tilde D)$.
If information is available on $X_1, \dots, X_n$ which are independent replications of $X = (\tilde N, \tilde C, \tilde D)$ with $X_i =(\tilde N_i, \tilde C_i, \tilde D_i)$, we may estimate $K\c(s)$ by $\hat K_n\c(s) = n^{-1} \sum_{i=1}^n (\indic{\tilde C_i > s} + \indic{\tilde C_i=s, \tilde D_i = 1})$ and $G_1\c(s)$ by $\hat G_{n,1}\c(s) = n^{-1} \sum_{i=1}^n \indic{\tilde C_i \leq s, \tilde D_i = 1}$. Equation~\eqref{eq:Lambda_identified} suggests the estimate 
\begin{equation}
    \hat \Lambda_n(s) = \int_0^s \frac{1}{\hat K_n\c(u)} \hat G_{n,1}\c(\d u)
\end{equation}
of $\Lambda(s)$. This estimate then leads to the estimate of $K$ given by $\hat K_n(s) = \prodi_0^{s-}(1-\hat \Lambda_n(\d u))$. This is basically the Kaplan--Meier estimator, but for the censoring distribution and in a left-continuous version. Finally, if $\hat \nu_n(s) = n^{-1} \sum_{i=1}^n \tilde N_i(s)$ as before, the estimate obtained by
\begin{equation} \label{eq:ipcw_estimate}
    \hat \mu_n(s) = \int_0^s \frac{1}{\hat K_n(u)} \hat \nu_n(\d u)
\end{equation}
yields an estimate of $\mu(s)$. This estimate corresponds to the estimate of~\eqref{eq:ipcw_estimate_emp} from Example~\ref{example:cens_obs} if no censoring of the censoring times occurs before time $s$, and is in this sense a generalization. 
The estimate of~\eqref{eq:ipcw_estimate} also corresponds to the estimate studied by \cite{ghosh2000}, which is also considered in \cite{Andersen2019}.

The estimate in \eqref{eq:ipcw_estimate} relies on empirical means of three counting processes, namely $\tilde N$, $N_{X,0}$, and $N_{X,1}$, where $N_{X,j}(s) = \indic{\tilde C \leq s, \tilde D = j}$ for $j=0, 1$, through a functional which is differentiable of any order in a $p$-variation setting. This allows us to take a similar approach as in Example~\ref{example:cens_obs} when studying the asymptotic properties of the estimator. 
Specifically, we let $\banach{F} = \W_p\rc([0, \infty))^3$, for a $p \in [1,2)$, with a general element of the form $f= (f_1, f_2, f_3) \in \banach{F}$ and a norm given by $\|f\|_{\banach{F}} = \max(\|f_1\|_{[p]}, \|f_2\|_{[p]}, \|f_3\|_{[p]})$. In particular, $F := (\nu, G_1\c, G_0\c) \in \banach{F}$, where $G_0\c(s) = \P(\tilde C \leq s, \tilde D = 0)$.
With $x = (\tilde n, \tilde c, \tilde d)$ for $\tilde n \in \W_p\rc$, $\tilde c > 0$, and $\tilde d \in \{0,1\}$, define $\delta_x$ by $\delta_x(s) = (\tilde n(s), N_{x,1}(s), N_{x,0}(s))$, where $N_{x,1}(s) = \indic{\tilde c \leq s, \tilde d = 1}$ and $N_{x,0}(s) = \indic{\tilde c \leq s, \tilde d = 0})$. Based on $n$ independent replications of $X = (\tilde N, \tilde C, \tilde D)$, the empirical version of $F$ is $F_n = n^{-1} \sum_{i=1}^n \delta_{X_i}$. In the following, the estimate $\hat \mu_n(s)$ from \eqref{eq:ipcw_estimate} is studied as a functional of $F_n$. This is done for $s \in [0,t]$ for a given $t > 0$ that satisfies $K\c(t) > 0$.
Define a $K\c$ functional by $K\c(f;s) = f_2(\infty) + f_3(\infty) - f_2(s-) - f_3(s)$. This functional is continuous and linear and so differentiable of any order as a functional from $\banach{F}$ to $\W_p([0,t])$ with first order derivative given by ${K\c}_f'(g;s) = K\c(g;s)$. Since $K\c(F;s) = K\c(s)$ and $K\c(t) > 0$, a $\Lambda$ functional can, at least in a neighborhood of $F$, be defined by
\begin{equation}
    \Lambda(f;s) = \int_0^s \frac{1}{K\c(f;u)} f_2(\d u),
\end{equation}
such that $f \mapsto \Lambda(f)$ is mapping into $\W_p\rc([0,t])$.
We see that $\Lambda(F;s) = \int_0^s K\c(u)^{-1} G_1\c(\d u) = \Lambda(s)$ as well as $\Lambda(F_n;s) = \hat \Lambda_n(s)$.
By the arguments in Example~\ref{example:cens_obs}, the $\Lambda$ functional is differentiable of any order in a neighborhood of $F$ with first order derivative
\begin{equation}
    \Lambda_f'(g;s) = \int_0^s \frac{1}{K\c(f;u)} g_2(\d u) - \int_0^s \frac{K\c(g;u)}{K\c(f;u)^2} f_2(\d u).
\end{equation}
Note how $\Lambda_F'(\delta_x - F;s) = \Lambda_F'(\delta_x;s) = \int_0^s K\c(u)^{-1} M_{x,1}(\d u)$ where $M_{x,1}(s) = N_{x,1}(s) - \int_0^s \indic{\tilde c \geq u} \Lambda(\d u)$.
Next, a $K$ functional can be defined by $K(f;s) = \prodi_0^{s-}(1- \Lambda(f; \d u))$ as a functional from $\banach{F}$ to $\W_p([0,t])$. This will then satisfy $K(F;s) = K(s)$ and $K(F_n;s) = \hat K_n(s)$.
The product integral $f \mapsto \prodi_0^{(\cdot)}(1+f(\d u))$, as a functional from $\W_p\rc([0,t])$ to $\W_p\rc([0,t])$ for a $p \in [1,2)$, is differentiable of any order with a first order derivative at $f$ in direction $g$ which is $\int_0^{(\cdot)} \prodi_0^{s-}(1+f(\d u)) g(\d s) \prodi_s^{(\cdot)}(1+f(\d u))$, which when $\Delta f(s):=f(s)-f(s-) \neq -1$ for all $s \in [0,t]$ can also be given as $\prodi_0^{(\cdot)}(1+f(\d u)) \int_0^{(\cdot)} (1+\Delta f(s))^{-1} g(\d s)$.
Since $\Lambda(s) < 1$ for all $s \in [0,t]$, the chain rule now reveals that the $K$ functional is differentiable of any order in a neighborhood of $F \in \banach{F}$ with first order derivative given by
\begin{equation}
    K_f'(g;s) = -K(f;s) \int_0^{s-} \frac{1}{1-\Delta \Lambda(f;u)} \Lambda_f'(g;\d u).
\end{equation}
Using the expression of $\Lambda_F'(\delta_x - F;s)$ given above, the expression 
\begin{equation} \label{eq:K_infl}
    K_F'(\delta_x - F;s) = -K(s) \int_0^{s-} \frac{1}{1-\Delta \Lambda(u)} \frac{1}{K\c(u)} M_{x,1}(\d u)
\end{equation}
can be obtained.
Lastly, the $\mu$ functional is defined by 
\begin{equation}
    \mu(f;s) = \int_0^s \frac{1}{K(f;u)} f_1(\d u)
\end{equation}
as a functional from $\banach{F}$ to $\W_p\rc([0,t])$.
The functional satisfies $\mu(F;s) = \mu(s)$ and $\mu(F_n;s) = \hat \mu_n(s)$ for $\hat \mu_n$ from~\eqref{eq:ipcw_estimate}.
As in Example~\ref{example:cens_obs}, the $\mu$ functional is differentiable of any order in a neighborhood of $F \in \banach{F}$. The first order derivative is given by
\begin{equation} \label{eq:cens_unobs_mu_deriv}
    \mu_f'(g;s) = \int_0^s \frac{1}{K(f; u)} g_1(\d u) - \int_0^s \frac{K_f'(g;u)}{K(f;u)^2} f_1(\d u).
\end{equation}
Using the expression of $K_F'(\delta_x - F;s)$ from \eqref{eq:K_infl} the influence function $\dot \mu$ can be expressed as 
\begin{equation}
\begin{aligned}
    \dot \mu(x;s) &= \int_0^s \frac{1}{K(u)} \tilde n(\d u) - \mu(s)\\
    &\phantom{{}=} + \int_0^s \int_0^{u-} \frac{1}{1-\Delta \Lambda(v)} \frac{1}{K\c(v)} M_{x,1}(\d v) \mu(\d u)
\end{aligned}
\end{equation}
for $x = (\tilde n, \tilde c, \tilde d)$. 
As was the case in Example~\ref{example:cens_obs}, using $p \in (4/3,2)$ allows for the conclusion that, for $s \in [0,t]$, $\sqrt{n}(\hat \mu_n(s) - \mu(s))$ has an asymptotic normal distribution with mean $\E(\dot \mu(X;s)) = 0$ and a variance of $\Var(\dot \mu(X;s))$.
In terms of the potentially unobserved $N$, $T$ and $C$, the influence function at $X$ can also be expressed as
\begin{equation} \label{eq:C_unobs_infl}
\begin{aligned}
    \dot \mu(X;s) &= N(s) - \mu(s) \\
    & \hspace{-12pt}- \int_0^{s-} \big(N(s) - N(u) - \frac{\indic{T > u}}{\P(T > u)} (\mu(s) - \mu(u)\big)\frac{1}{K(u+)} M_C(\d u).
\end{aligned}
\end{equation}
This expression leads to the variance expression
\begin{equation} \label{eq:var_expr_C_unobs}
\begin{aligned}
   \Var(\dot \mu(X;s)) &= \Var(N(s)) + \int_0^{s-} \Var(N(s) - N(u)) \frac{1}{K(u+)} \Lambda(\d u) \\
   &\phantom{{}=} - \int_0^{s-} (\mu(s) - \mu(u))^2 \frac{\P(T \leq u)}{\P(T > u)} \frac{1}{K(u+)} \Lambda(\d u)
  \end{aligned}
\end{equation}
under the independence assumption $(N,T) \independent C$.
This variance can be estimated by $n^{-1} \sum_{i=1}^n \mu_{F_n}'(\delta_{X_i} - F_n;s)^2$ where the expression of $\mu_{F_n}'(\delta_x - F_n; s)$ can be obtained by insertion in \eqref{eq:cens_unobs_mu_deriv}. This variance estimate will be very similar to the one suggested by \cite{ghosh2000} and seemingly identical in the absence of ties.
Some more details on the derivations of~\eqref{eq:C_unobs_infl} and~\eqref{eq:var_expr_C_unobs} can be found in Appendix~\ref{appendix:infl_var}. In comparison to Example~\ref{example:cens_obs} where the actual censoring times are available, the last term of~\eqref{eq:var_expr_C_unobs} reveals that this asymptotic variance is smaller than for the estimator of Example~\ref{example:cens_obs}. 
This means that even when information is available on the potential censoring times $C_1, \dots, C_n$ in a setting with a terminal event and from an asymptotic point of view, the analyst is better off by disregarding this complete information and relying only on the censored censoring times.
\end{example}

\begin{example} \label{example:pseudo}
The pseudo-observation method is a method for regression analysis of an outcome such as $N(t)$ when the outcomes are incompletely observed such as in examples~\ref{example:cens_obs} and~\ref{example:cens_unobs}. Given $n$ independent replications $X_1, \dots, X_n$ of $X$, the method works by substituting all the potentially unobserved outcomes $N_1(t), \dots, N_n(t)$ for jack-knife pseudo-values, $\hat \mu_{n,1}(t), \dots, \hat \mu_{n,n}(t)$ with $\hat \mu_{n,i}(t) = n \hat \mu_n(t) - (n-1) \hat \mu_n^{(i)}(t)$, and proceeding by performing whatever regression analysis was intended for $N_1(t), \dots, N_n(t)$. Here, $\hat \mu_n(t)$ is an estimator of the expectation $\mu(t) = \E(N(t))$ based on the sample $X_1, \dots, X_n$ and $\hat \mu_n^{(i)}(t)$ is the same estimator applied to the sample where the $i$th observation has been left out.
Suppose $Z$ denotes covariates and the regression analysis concerns a model of $\E(N(t) \given Z)$. According to \cite{overgaard2017asymptotic}, this pseudo-observation approach will work, under some regularity conditions, in a setting where the estimator can be seen as a functional applied to a sample average, $\hat \mu_n(t) = \mu(F_n;t)$ for a functional $\mu(\cdot; t) \from \banach{F} \to \R$ defined on a Banach space $ \banach{F}$ where $F_n = n^{-1} \sum_{i=1}^n \delta_{X_i}$ for some function $x \mapsto \delta_x$ applied to the observed $X_1, \dots, X_n$, if 
\begin{enumerate}[label=(\alph*)]
    \item \label{it:conv} an $F \in \banach{F}$ and an $\varepsilon \in (0, 1/4]$ exist such that $\|F_n - F\|_{\banach{F}} = o_{\P}(n^{-1/4 - \varepsilon/2})$ and $\lim_{y \to \infty} y^{1/\varepsilon}\P(\|\delta_X\| > y) = 0$, 
    \item \label{it:diff} the functional $f \mapsto \mu(f;t)$ is continuously differentiable of order 2 with a Lipschitz continuous second order derivative in a neighborhood of $F$,
    \item \label{it:cond} the influence function $\dot \mu(x;t) = \mu_F'(\delta_x - F;t)$ satisfies 
    \begin{equation}
        \E(\dot \mu(X;t) \given Z) = \E(N(t) \given Z) - \mu(F;t).
    \end{equation}
\end{enumerate}
Conditions~\ref{it:conv} and~\ref{it:diff} agree well with the estimators of examples~\ref{example:cens_obs} and~\ref{example:cens_unobs} above.
The condition that $\lim_{y \to \infty} y^{1/\varepsilon}\P(\|\delta_X\| > y) = 0$ is fulfilled in either example if $N(t)$ has finite moment of order a little higher than $1/\varepsilon$, that is, at least a little more than fourth order with the choice $\varepsilon = 1/4$. The convergence order $\|F_n - F\|_{\banach{F}} = o_{\P}(n^{-3/8})$ with this choice of $\varepsilon$ is achieved for the $p$-variation-based norm for $p \in (8/5,2)$.
The range is $p \in (4/(3-2 \varepsilon), 2)$ more generally for the relevant $\varepsilon \in (0,1/4]$.
The functionals involved in the estimators of the examples are differentiable of any order for such choices of $p$, and so condition~\ref{it:diff} is met in this setting since the Lipschitz continuity of the second order derivative in a neighborhood of $F$ follows from third order differentiability in a neighborhood of $F$.

This leaves us with condition~\ref{it:cond}. If we assume $(N,Z) \independent C$ or $(N,T,Z) \independent C$ in the examples, respectively, this can easily be seen to be fulfilled by appealing to the expressions of~\eqref{eq:C_obs_infl} and~\eqref{eq:C_unobs_infl}, respectively, since $\E(M_C(s) \given N, Z) = 0$ or similarly $\E(M_C(s) \given N, T, Z) = 0$ for $s \in [0,t]$ in these settings.
The pseudo-observation method with pseudo-observations based on the estimator of Example~\ref{example:cens_unobs} has been suggested and applied by Andersen, Angst, and Ravn in \cite{Andersen2019}, where their equation~(4) corresponds to~\eqref{eq:ipcw_estimate} of this paper.
With conditions \ref{it:conv}, \ref{it:diff}, and \ref{it:cond} fulfilled, the results of \cite{overgaard2017asymptotic} now brings a theoretical justification to this approach.
\end{example}

\section{Concluding remarks}

In many cases counting processes may go to infinity as time passes. Such a setting does not fit well with an assumption of finite $p$-variation or finite moment conditions and the approach described in this paper is not directly applicable. It may be useful to consider stopped or localized versions of such counting processes, and such stopped or localized counting processes may perhaps be studied using the $p$-variation approach described here.
Concretely, the settings of Examples~\ref{example:uncens}--\ref{example:pseudo} reduced interest to the interval $[0,t]$ or simply $t$ for some time point $t > 0$ and the stopped processes $N(\cdot \wedge t)$ can replace $N$ without issues in such cases.

The convergence results in $p$-variation of Section~\ref{sec:main} are likely not the best possible and further studies of this subject are called for. It is, for instance, not clear whether the moment condition of Theorem~\ref{theorem:generalE} or the boundedness condition of Theorem~\ref{theorem:almost_sure} are necessary or if such convergence results apply more generally to averages of independent replications of random elements in $\W_p$ spaces.

In Banach spaces, measurability is not a straightforward matter. Measurability has not been touched upon in any detail here. When a counting process $N$ is considered a random element, it is in the sense of measurable coordinate projections, that $N(s)$ is a random variable for all relevant $s$. In the examples of Section~\ref{sec:count}, the various $\mu$ functionals have not been formalized as measurable maps from $\banach{F}$ to $\W_p\rc$. It is however clear by inspection that at $s \in [0,t]$, the various $\hat \mu_n(s)$ and $\dot \mu(X;s)$ are random variables. 
Owing to right-continuity, the $\|F_n - F\|_{[p]}$ of Section~\ref{sec:main}, and so similarly the various $\|F_n - F\|_{\banach{F}}$ of Section~\ref{sec:count}, are random variables since the involved suprema can be taken over the rationals.

Various convergence results exist in $p$-variation for $p \geq 2$, see for instance Theorem~3.1 and~4.1 of \cite{qian1998} and Theorem~1 of \cite{huang2001}. 
Since somewhat fewer functionals are differentiable in a $p$-variation setting for such $p$s, this has not been considered here.

\section*{Acknowledgement}
Comments and suggestions by Erik Thorlund Parner and Jan Pedersen have considerably improved this paper. 
The work presented in this article is supported by the Novo Nordisk Foundation, grant NNF17OC0028276.

\appendix
\section{Fréchet differentiability} \label{appendix:differentiability}

A functional $\phi$ defined on an open subset $U$ of a Banach space $\banach{D}$ and with values in a Banach space $\banach{E}$ is said to be differentiable at $f \in U$ if a linear continuous operator $\phi_f' \in L(\banach{D}, \banach{E})$ exists such that 
\begin{equation}
    \|\phi(f + g) - \phi(f) - \phi_f'(g)\|_{\banach{E}} = o(\|g\|_{\banach{D}})
\end{equation}
as $\|g\|_{\banach{D}} \to 0$. In that case, $\phi_f' \in L(\banach{D}, \banach{E})$ is the first order derivative of $\phi$ at $f$ and, for any $g \in \banach{D}$, $\phi_f'(g) \in \banach{E}$ is called the first order derivative of $\phi$ at $f$ in direction $g$.
The space of linear, continuous operators $L(\banach{D}, \banach{E})$ is itself a Banach space when equipped with the operator norm given by
\begin{equation}
    \|\lambda\|_{L(\banach{D}, \banach{E})} = \inf \{c \geq 0 : \|\lambda(f)\|_{\banach{E}} \leq c \|f \|_{\banach{D}} \textup{ for all } f \in \banach{D}\}
\end{equation}
and the first order derivative $\phi' \from U \to L(\banach{D}, \banach{E})$, given by $f \mapsto \phi_f'$, is simply another functional. Higher order differentiability of the functional $\phi$ can then iteratively be defined in terms of differentiability properties of the functional $\phi'$.
If $\phi$ is differentiable of order $k$, the $k$th order derivative can be identified with a functional $\phi^{(k)} \from U \to L^k(\banach{D}, \banach{E})$ where $L^k(\banach{D}, \banach{E})$ is the space of $k$-linear, continuous operators. For $\lambda \in L^k(\banach{D}, \banach{E})$ a $c > 0$ exists such that
\begin{equation}
    \|\lambda(f_1, \dots, f_k)\|_{\banach{E}} \leq c \|f_1\|_{\banach{D}} \dots \|f_k\|_{\banach{D}}
\end{equation}
and, in similarity to the $k=1$ case, the norm of $\lambda$ in the Banach space $L^k(\banach{D}, \banach{E})$ is given by the infimum over such constants $c$. The $k$th order derivative is not only continuous and $k$-linear, but also symmetric in its arguments.
If $\phi \from U \to \banach{E}$, in the setting from before, is continuously differentiable of order $k$, a $k$th order Taylor approximation in line with
\begin{equation}
    \phi(f+g) = \phi(f) + \sum_{j=1}^k \frac{1}{j!} \phi_f^{(j)}(g, \dots, g) + o(\|g\|_{\banach{D}}^k)
\end{equation}
applies as $\|g\|_{\banach{D}} \to 0$. 
In particular if $\phi$ is continuously differentiable of order 2 in a neighborhood of $f \in \banach{D}$, or weaker still if $\phi'$ is Lipschitz continuous in a neighborhood of $f \in \banach{D}$, then 
\begin{equation} \label{eq:Taylor1+Lip}
    \phi(f+g) = \phi(f) + \phi_f'(g) + O(\|g\|_{\banach{D}}^2)
\end{equation}
as $\|g\|_{\banach{D}} \to 0$.
If $\banach{D}$, $\banach{E}$, and $\banach{F}$ are Banach spaces and $\phi$ is a functional defined and differentiable on a neighborhood of $f \in \banach{D}$ as a functional into $\banach{E}$, whereas $\psi$ is a functional defined and differentiable on a neighborhood of $\phi(f) \in \banach{E}$ as a functional into $\banach{F}$, then $\psi \circ \phi$ is differentiable in a neighborhood of $f \in \banach{D}$ as a functional into $\banach{F}$. The derivative is given by 
\begin{equation} \label{eq:chain_rule}
    (\psi \circ \phi)_f'(g) = \psi_{\phi(f)}'(\phi_f'(g)),
\end{equation}
the derivative of $\psi$ at $\phi(f)$ in direction $\phi_f'(g)$.
This is the chain rule.

\section{Influence functions and the variance expressions} \label{appendix:infl_var}
In obtaining the desired expressions of the influence functions and variance expressions in examples~\ref{example:cens_obs} and~\ref{example:cens_unobs}, an important identity is
\begin{equation}
    \frac{\indic{C \geq s)}}{K(s)} - 1 = -\int_0^{s-} \frac{1}{K(u+)} M_C(\d u),
\end{equation}
which can be seen as a consequence of the Duhamel equation, see for instance \cite{Gill1990}, here in the form
\begin{equation}
    \indic{C \geq s} - K(s) = \int_0^{s-} \indic{C \geq u}(\Lambda - N_C)(\d u) \frac{K(s)}{K(u+)}.
\end{equation}
Since $\tilde N(s) = \int_0^s \indic{C \geq u} N(\d u)$, we have in Example~\ref{example:cens_obs}
\begin{equation}
\begin{aligned}
    &\int_0^s \frac{1}{K(u)} \tilde N(\d u) - \int_0^s \frac{\indic{C \geq u}}{K(u)} \mu(\d u) \\
    =& \int_0^s \frac{\indic{C \geq u}}{K(u)}(N - \mu)(\d u) \\
    =& N(s) - \mu(s) + \int_0^s \big(\frac{\indic{C \geq u}}{K(u)} - 1\big)(N - \mu)(\d u) \\
    =& N(s) - \mu(s) - \int_0^s \int_0^{u-} \frac{1}{K(v+)} M_C(\d v)(N - \mu)(\d u) \\
    =& N(s) - \mu(s) - \int_0^{s-} \frac{N(s) - \mu(s) - N(v) + \mu(v)}{K(v+)} M_C(\d v), 
\end{aligned}
\end{equation}
which shows the alternative expression of the influence function in Example~\ref{example:cens_obs}.
For Example~\ref{example:cens_unobs}, it can be noted that we have $M_{X,1}(s) = \int_0^s \indic{T > u} M_C(\d u)$ and also $(1- \Delta \Lambda(s))K\c(s) = \P(T > s) K(s+)$. It is now a similar argument as above which yields
\begin{equation}
\begin{aligned}
    &\int_0^s \frac{1}{K(u)} \tilde N(\d u) - \mu(s) + \int_0^s \int_0^{u-} \frac{1}{1-\Delta \Lambda(v)} \frac{1}{K\c(v)} M_{X,1}(\d v) \mu(\d u) \\
    =& N(s) - \mu(s) + \int_0^s \big(\frac{\indic{C \geq u}}{K(u)} - 1\big) N(\d u) \\
    &+ \int_0^s \int_0^{u-} \frac{\indic{T > v}}{\P(T > v)} \frac{1}{K(v+)} M_{C}(\d v) \mu(\d u)  \\
    =& N(s) - \mu(s) \\
    &- \int_0^{s-} \big(N(s) - N(v) - \frac{\indic{T > v}}{\P(T > v)}(\mu(s) - \mu(v))\big) \frac{1}{K(v+)} M_{C}(\d v).
\end{aligned}
\end{equation}
The process given by $M_C(s) = N_C(s) - \int_0^s \indic{C \geq u} \Lambda(\d u)$ is a martingale with respect to the natural filtration of $N_C$. If we assume $N \independent C$ or $(N,T) \independent C$ this is the case even in the conditional distribution given $N$ or given $(N,T)$. 
So, for certain stochastic processes $A(s)$ and $B(s)$ that are measurable with respect to $\sigAlg{A} = \sigma(N)$ or $\sigAlg{A} = \sigma(N, T)$, we have $\Var(A(s) + \int_0^s B(u) K(u+)^{-1}M_{C}(\d u) \given \sigAlg{A}) = \int_0^s B(u)^2 K(u+)^{-1} \Lambda(\d u)$ by martingale properties since, for instance, the optional variation process of $M_C$ is given by $[M_C](s) = \int_0^s (1-\Delta \Lambda(u)) N_C(\d u) - \int_0^s \Delta \Lambda(u) M_C(s)$ with conditional expectation $\E([M_C](s) \given \sigAlg{A} ) = \int_0^s (1- \Delta \Lambda(u)) G(\d u) = \int_0^s K(u+) \Lambda(\d u)$.
The law of total variation then reveals $\Var(A(s) + \int_0^s B(u) K(u+)^{-1} M_C(\d u)) = \Var(A(s)) + \int_0^s \E(B(u)^2) K(u+)^{-1} \Lambda(\d u)$. Both~\eqref{eq:var_expr_C_obs} and~\eqref{eq:var_expr_C_unobs} follow this structure, although establishing~\eqref{eq:var_expr_C_unobs} requires an additional direct calculation as follows.
In this case, $N(s) - N(u) - \indic{T > u} \P(T > u)^{-1}(\mu(s) - \mu(u))$ is playing the role of $B(u)$. It is the fact that $T$ is terminal for $N$ which implies $(N(s) - N(u)) \indic{T > u} = N(s) - N(u)$ and so
\begin{equation}
    \begin{aligned}
    &\phantom{{}=}\E\big(\big(N(s) - N(u) - \frac{\indic{T > u}}{\P(T > u)}(\mu(s) - \mu(u)\big)^2\big) \\
    &=\E((N(s)-N(u))^2) - 2 \frac{\E(N(s) - N(u))}{\P(T > u)} (\mu(s) - \mu(u)) \\
    &\phantom{{}=}+ \frac{\E(\indic{T > u})}{\P(T > u)^2}(\mu(s) - \mu(u))^2 \\
    &= \E((N(s)-N(u))^2) - \frac{1}{\P(T > u)}(\mu(s) - \mu(u))^2 \\
    &= \Var(N(s) - N(u)) - \frac{\P(T \leq u)}{\P(T > u)}(\mu(s) - \mu(u))^2
    \end{aligned}
\end{equation}
as desired.

\end{document}